\newtheorem{theorem}{Theorem}[section]
\newtheorem{proposition}[theorem]{Proposition}
\newtheorem{corollary}[theorem]{Corollary}
\theoremstyle{definition}
\numberwithin{equation}{section}
\begin{document}
\setcounter{page}{1}
\title{\vspace{-1.5cm}
\vspace{.5cm}
\vspace{.7cm}
{\large{\bf  On the Phase Retrievable Sequences}}}
 \date{}
\author{{\small \vspace{-2mm} 
 F. Javadi$^1$\footnote{Corresponding author}~ and M. J. Mehdipour$^{1}$}}
\affil{\small{\vspace{-4mm}  $^1$ Department of Mathematics, Shiraz University of Technology, P. O. Box $71555$-$313$, Shiraz,  Iran.}}
\affil{\small{\vspace{-4mm} f.javadi@sutech.ac.ir}}
\affil{\small{\vspace{-4mm} mehdipour@sutech.ac.ir}}
\maketitle
\hrule
\begin{abstract}
\noindent
 In this paper, we study phase retrievable sequences and give a characterization of phase retrievability of a sequence of bounded linear operators on a Hilbert space $H$; in particular, for $H=\ell_2^d(\Bbb{C})$. We also give several approaches for constructing
phase retrievable sequences.
Then, we investigate the property of phase retrieval for $g$-frames and frames.
 \end{abstract}

 \noindent \textbf{Keywords}: Phase retrieval, frame, $g$-frame.\\
{\textbf{2020 MSC}}: 42C15, 46C05.
\\
\hrule
\vspace{0.5 cm}
\baselineskip=.55cm
\section{Introduction}
Throughout this paper, $H$ is a complex Hilbert space and
$\mathcal{L}(H)$ is  the space  of  all bounded linear operators on $H$.
A sequence $\{\Lambda_k\}_{k\in \Bbb{Z}}$
in $\mathcal{L}(H)$ is called \textit{phase retrieval} if for every
$x,y \in H$  satisfying
\begin{eqnarray*}
\|\Lambda_k(x)\|=\|\Lambda_k(y)\|
\end{eqnarray*}
 for all  $k \in \Bbb{Z}$, then
\begin{eqnarray*}
x= \alpha y  ~~~ \text{with} ~~~|\alpha |=1.
\end{eqnarray*}
 It is routine to check that  for every $n \in \Bbb{N}$, $\{\Lambda^n_k\}_{k \in \Bbb{Z}}$ is also  phase retrieval.
Also, 
if  $\{\Lambda_k\}_{k \in \Bbb{Z}}$ is a phase retrievable sequence of $\Lambda_k$  normal operators on $H$,
 then $\{\Lambda^*_k\}_{k \in \Bbb{Z}}$ is  phase retrieval. 
Note that if $\Lambda_k(x)=0$, then
\begin{eqnarray*}
\|\Lambda_k(x)\|=\|\Lambda_k(0)\|.
\end{eqnarray*}
So phase retrievability of
 $\{\Lambda_k\}_{k\in \Bbb{Z}}$ implies that $x=0$.
 Thus $\Lambda_k$ is injective for all $k\in \Bbb{Z}$.

A sequence $\{x_k\}_{k \in \Bbb{Z}}$ in  $H$ is called \textit{phase retrieval}, if for every  $x,y \in H$ and $k\in\Bbb{Z}$, from
$|\langle x, x_k \rangle |= |\langle y, x_k \rangle |$
implies that there exists $\alpha\in\Bbb{C}$ with $|\alpha|=1$ and
$x = \alpha y.$
We define the equivalence relation ``$\sim$'' on $H\times H$ as follows: 
$x\sim y$ if and only if there exists $\alpha\in\Bbb{C}$ with $|\alpha|=1$ such that $x=\alpha y$.
 Put $H_r:=H/\sim$. It is easy to see that $H_r=H/\Bbb{T}$, where $\Bbb{T}$ is the complex unit circle. Balan et al. [3]  introduced the function $\Bbb{M}: H_r\rightarrow \ell^2(\Bbb{I})$ which is defined by
\begin{eqnarray*}
\Bbb{M}(\hat{x})=\{|\langle x, x_k\rangle|\}_{k\in\Bbb{I}},
\end{eqnarray*}
where $\{x_k\}_{k \in \Bbb{I}}$ is a frame for $H$ and $\Bbb{I}$ is a finite or countable subset of $\Bbb{Z}$. They gave conditions under which $\Bbb{M}$ is injective. These results have applications in quantum mechanic; see [16]. Note that if $H$ is a real Hilbert space, then $H_r=H/\{\pm 1\}$. In the case where, $\hbox{dim}(H)=M$, Cahill et al. [7] considered the function $\Bbb{A}: \Bbb{R}^M/\{\pm 1\}\rightarrow \Bbb{R}^N$ defined by
\begin{eqnarray*}
\Bbb{A}(x)(n)=\|P_n(x)\|^2,
\end{eqnarray*}
where $P_n: \Bbb{R}^M\rightarrow W_n$ is the orthogonal projection on a subspace $W_n$ of $H$ for $n= 1, ..., N$. They characterized the subspaces $\{W_n\}_{n=1}^N$
for which $\{\|P_n(x)\|\}_{n=1}^N$
are injective for all $x\in H$. They also used the phrase ``$\{W_n\}_{n=1}^N$
allows phase retrieval'' instead of the phrase ``$\Bbb{A}$ is injective''.
Some authors continued these investigations [1, 2, 4, 5, 6, 12, 15]. For example, Han and Juste [11] studied phase-retrievable operator-valued frames and gave several characterizations for them. They found some relations between  phase-retrievable operator-valued frames, projective group representation frames and representations of quantum channels. These results naturally leads us to study injectivity of the function $\Bbb{A}:H_r/\Bbb{T}\rightarrow \ell^2(\Bbb{Z})$ defined by
\begin{eqnarray*}
\Bbb{A}(x)(k)=\|\Lambda_k(x)\|^2,
\end{eqnarray*}
where $\{\Lambda_k\}_{k\in\Bbb{Z}}$ is a sequence in $\mathcal{L}(H)$.

The outline of the paper is as follows.
In Section 2, we investigate phase retrievability of a sequence $\{\Lambda_k\}_{k\in\Bbb{Z}}$ in $\mathcal{L}(H)$ and provide some necessary and sufficient conditions under which $\{\Lambda_k\}_{k\in\Bbb{Z}}$ to be phase retrieval. We also show that $\{\Lambda_k\}_{k\in\Bbb{Z}}$ is phase retrieval 
(a  phase retrievable g-frame) if and only if $\{\Lambda_kT\}_{k\in\Bbb{Z}}$  is phase retrieval  (a  phase retrievable g-frame) when $T\in\mathcal {L}(H)$ is injective (surjective).
Section 3 is devoted to the study of a phase retrievable frame $\{x_k\}_{k\in\Bbb{Z}}$ in $H$. We establish that if $\{x_k\}_{k\in\Bbb{Z}}$ is a phase retrievable frame, then $\{Tx_k\}_{k\in\Bbb{Z}}$ is a phase retrievable frame, where $T\in\mathcal {L}(H)$ is injective. The converse remaind valid if $T$ is injective and self-adjoint. In particular, $\{x_k\}_{k\in\Bbb{Z}}$ is a phase retrievable frame if and only if the canonical dual of $\{x_k\}_{k\in\Bbb{Z}}$ is phase retrieval.
\section{Characterization of phase retrievable sequences}
We commence this section with the main result of this paper.
\begin{theorem}\label{j2-7}
A sequence $\{ \Lambda_k \}_{k \in \Bbb{Z}}$ in
 ${\cal L}(H)$ is not  phase retrieval  if and only if there exist nonzero elements $x, y\in H$
such that $x \not \in i \Bbb{R} y$ and
$\hbox{Re}\langle \Lambda_k(x), \Lambda_k(y) \rangle=0$
for all $ k \in \Bbb{Z}$.
\end{theorem}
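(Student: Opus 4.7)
The plan is to use a polarization identity to convert the norm-equality condition defining failure of phase retrieval into a condition on real inner products. Direct expansion gives
\[
\|\Lambda_k(x+y)\|^2 - \|\Lambda_k(x-y)\|^2 = 4\,\hbox{Re}\langle \Lambda_k(x), \Lambda_k(y)\rangle.
\]
Under the bijective substitution $(u,v) = (x+y, x-y)$, with inverse $(x,y)=\bigl((u+v)/2,(u-v)/2\bigr)$, the condition $\|\Lambda_k(u)\| = \|\Lambda_k(v)\|$ for all $k$ becomes exactly $\hbox{Re}\langle \Lambda_k(x), \Lambda_k(y)\rangle = 0$ for all $k$. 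The remainder of the work is to translate the non-equivalence ``$u \neq \alpha v$ for every $|\alpha|=1$'' into the stated conditions ``$x, y$ nonzero and $x \notin i\Bbb{R} y$''.

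For the forward direction, I would start with witnesses $x_0, y_0 \in H$ satisfying $\|\Lambda_k(x_0)\| = \|\Lambda_k(y_0)\|$ for all $k$ but with $x_0 \neq \alpha y_0$ for every $|\alpha|=1$, and set $x := x_0 + y_0$, $y := x_0 - y_0$. The polarization identity immediately forces $\hbox{Re}\langle \Lambda_k(x), \Lambda_k(y)\rangle = 0$. Both $x$ and $y$ are nonzero, since $x_0 = \mp y_0$ would mean $x_0 = \alpha y_0$ with $\alpha = \mp 1$ of modulus one, contradicting the choice. If $x = ity$ for some real $t$, solving yields $x_0 = -\frac{1+it}{1-it}\,y_0$, whose coefficient has modulus one, again a contradiction. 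One degenerate subcase needs separate care: if $x_0 = 0$, then the hypothesis forces $\Lambda_k(y_0) = 0$ for every $k$, and the pair $x = y = y_0$ directly supplies the desired witness (noting $1 \notin i\Bbb{R}$).

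For the converse, I would start from nonzero $x, y$ with $x \notin i\Bbb{R} y$ and $\hbox{Re}\langle \Lambda_k(x), \Lambda_k(y)\rangle = 0$ for all $k$, and set $u := x + y$, $v := x - y$. Polarization yields $\|\Lambda_k(u)\| = \|\Lambda_k(v)\|$ for all $k$, so it remains to rule out $u = \alpha v$ with $|\alpha| = 1$. The case $\alpha = 1$ would force $y = 0$; for $\alpha = e^{i\theta} \neq 1$ a short computation gives $-\frac{1+\alpha}{1-\alpha} = -i\cot(\theta/2)$, which is purely imaginary, so $u = \alpha v$ would imply $x \in i\Bbb{R} y$, contradicting the hypothesis.

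The main obstacle is not deep; it is the careful bookkeeping of the correspondence between the two parametrizations together with the degenerate zero case. The heart of the statement is the Cayley-type observation that $\alpha \mapsto -\frac{1+\alpha}{1-\alpha}$ maps the unit circle bijectively onto $i\Bbb{R}\cup\{\infty\}$, which is precisely why the condition $x \notin i\Bbb{R} y$ (rather than $x \notin \Bbb{R} y$) appears in the statement.
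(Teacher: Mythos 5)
Your proposal is correct and follows essentially the same route as the paper: the polarization identity applied to the substitution $(x+y,\,x-y)$, together with the observation that $u=\alpha v$ with $|\alpha|=1$ corresponds (via the Cayley-type map $\alpha\mapsto -\frac{1+\alpha}{1-\alpha}$) exactly to $x\in i\Bbb{R}y$. Your treatment is, if anything, slightly more careful than the paper's on the degenerate cases ($\alpha=1$ and a zero witness), but the underlying argument is identical.
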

\begin{proof}
First, assume that
$\{ \Lambda_k \}_{k \in \Bbb{Z}}$
 is not phase retrieval.
Then there exist $f, g \in H$ such that for every $\alpha\in\Bbb{C}$
with
 $|\alpha|=1$
 we have $f \not = \alpha g$ and for any $k\in\Bbb{Z}$
\begin{eqnarray}\label{f11}
\langle \Lambda_k(f), \Lambda_k(f) \rangle =\langle \Lambda_k(g), \Lambda_k(g) \rangle.
\end{eqnarray}
Let
$x=\frac{1}{2}(f+g)$
and
$y=\frac{1}{2}(f-g)$.
Then
$f=x+y$
and
$g=x-y$.
It follows from (\ref{f11})  that for every
$k\in\Bbb{Z}$
\begin{eqnarray*}
\hbox{Re}\langle \Lambda_k (x), \Lambda_k(y) \rangle =0.
\end{eqnarray*}
Clearly, $x$ and $y$ are nonzero.
If $x=i ay$ for some $a \in \Bbb{R}$, then
$f=(ia+1)y $ and $g=(ia-1)y.$
This shows that
\begin{eqnarray*}
\frac{f}{g}=\frac{ia+1}{ia-1} \,\,\,\,\,  \hbox{and} \,\,\,\,\,
\bigg| \frac{ia+1}{ia-1} \bigg|=1,
\end{eqnarray*}
a contradiction.
Therefore, $x \not \in i\Bbb{R} y$.

Conversely, let
$\hbox{Re}\langle \Lambda_k(x), \Lambda_k(y) \rangle=0$
for some nonzero elements
$x, y \in H$
with $x \not \in i\Bbb{R} y$.
Then
$\langle \Lambda_k(f), \Lambda_k(f) \rangle =\langle \Lambda_k(g), \Lambda_k(g) \rangle,$
where
 $f=x+y$  and $g=x-y$.
Suppose that
$f= \alpha g$ with $|\alpha|=1$.
Then
 $\|f\|=\|g\|$ and $x=\beta y$ with $\beta\not =0$.
Thus
$|\beta+1|=|\beta-1|,$
\begin{eqnarray*}
f=(\beta+1)y\quad\hbox{and}\quad g=(\beta-1)y.
\end{eqnarray*}
If $H$ is a real Hilbert space,
then $\beta$
 is real and hence $\beta=0$.
This is a contradiction.
If $H$ is a complex Hilbert space, then $\beta \in \Bbb{C}$.
Thus
$$|\beta+1|=|\beta-1|$$
holds only if
 $\beta=ia$
 for some $a \in \Bbb{R}$,
which implies that $x \in i \Bbb{R} y$.
These contradictions lead to $\{\Lambda_k\}_{k \in \Bbb{Z}}$ is not phase retrieval.
\end{proof}
\begin{corollary}\label{j2-123}
Let $\{ \Lambda_k \}_{k \in \Bbb{Z}}$ be a sequence in
${\cal L}(H)$ and $\hbox{dim}\; H>1$.
If $\{ \Lambda_k \}_{k \in \Bbb{Z}}$
 have a nonzero common root,
then $\{ \Lambda_k \}_{k \in \Bbb{Z}}$ is not phase retrieval. 
\end{corollary}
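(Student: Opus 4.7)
The plan is to derive this directly from the characterization in Theorem~\ref{j2-7}. Since that theorem says a sequence fails to be phase retrievable exactly when we can produce nonzero $x,y\in H$ with $x\not\in i\mathbb{R}y$ and $\mathrm{Re}\langle \Lambda_k(x),\Lambda_k(y)\rangle = 0$ for every $k$, it suffices to exhibit such a pair under the hypothesis that the $\Lambda_k$ have a common nonzero root.

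Concretely, let $z\in H\setminus\{0\}$ satisfy $\Lambda_k(z)=0$ for all $k\in\mathbb{Z}$; I would take $y=z$. Then automatically $\langle \Lambda_k(x),\Lambda_k(y)\rangle = \langle \Lambda_k(x),0\rangle = 0$ for every choice of $x$ and every $k$, so the real-part condition is free. The remaining task is just to pick $x$ nonzero with $x\notin i\mathbb{R}y$. Because $\dim H>1$, the $\mathbb{C}$-linear span $\mathbb{C}z$ is a proper subspace of $H$, so I can choose any $x\in H\setminus\mathbb{C}z$. Such an $x$ is nonzero and, since $i\mathbb{R}z\subset\mathbb{C}z$, it certainly lies outside $i\mathbb{R}y$.

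With $x,y$ so chosen, Theorem~\ref{j2-7} applies and delivers the conclusion that $\{\Lambda_k\}_{k\in\mathbb{Z}}$ is not phase retrieval. There is essentially no obstacle here; the only subtlety is remembering that the condition in the theorem is $x\notin i\mathbb{R}y$ rather than the stronger $x\notin\mathbb{C}y$, but this causes no difficulty because avoiding all of $\mathbb{C}y$ already avoids the line $i\mathbb{R}y$, and this is possible precisely thanks to the dimension hypothesis $\dim H>1$. (Note that without $\dim H>1$ the statement fails: on a one-dimensional space any injective operator is phase retrieval even though no nonzero common root exists, but the corollary is vacuously compatible with this since its hypothesis forces non-injectivity.)
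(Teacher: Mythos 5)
Your argument is correct and takes essentially the same route as the paper: use the common root as one of the two vectors in Theorem~\ref{j2-7}, observe that $\operatorname{Re}\langle \Lambda_k(x),\Lambda_k(y)\rangle=0$ holds automatically, and use $\dim H>1$ to pick the other vector outside $i\mathbb{R}$ times the root. (Your closing parenthetical is inessential and slightly off—the conclusion would in fact persist for $\dim H=1$, since a nonzero common root forces every $\Lambda_k=0$—but it plays no role in the proof.)
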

\begin{proof}
Let $x$ be a nonzero element of $H$ such that $\Lambda_k(x)=0$ for all $k\in\Bbb{Z}$.
Since
$\hbox{dim}\; H>1$,
 there exists a nonzero element $y\in H$ with
 $y\not\in i\Bbb{R}x$.
The proof will be completed if we only note that $Re\langle\Lambda_k(x), \Lambda_k(y)\rangle=0$
 for all $k\in\Bbb{Z}$.
\end{proof}
\begin{corollary}\label{j2-8}
Every phase retrievable sequence in ${\cal L}(H)$
separates points of  $H$.
\end{corollary}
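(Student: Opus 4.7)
The plan is a direct appeal to the definition of phase retrievability, essentially the same trick used in the introduction to show that the common zero set of a phase retrievable family is trivial. Recall that the statement ``$\{\Lambda_k\}_{k\in\Bbb{Z}}$ separates points of $H$'' means that the assignment $x \mapsto (\Lambda_k(x))_{k\in\Bbb{Z}}$ is injective, or equivalently that $\bigcap_{k\in\Bbb{Z}} \ker \Lambda_k = \{0\}$. So I would fix $x,y \in H$ with $\Lambda_k(x)=\Lambda_k(y)$ for every $k\in\Bbb{Z}$, and aim to conclude $x=y$.

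Setting $z := x-y$, linearity of the $\Lambda_k$ gives $\Lambda_k(z)=0$, hence $\|\Lambda_k(z)\| = 0 = \|\Lambda_k(0)\|$ for all $k \in \Bbb{Z}$. Applying the phase retrievability hypothesis to the pair $(z,0)$ produces some $\alpha \in \Bbb{C}$ with $|\alpha|=1$ such that $z = \alpha \cdot 0 = 0$, i.e., $x = y$, as required. The argument is purely formal and I do not foresee any real obstacle.

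A more structural alternative would be the contrapositive, via Corollary \ref{j2-123}: a failure of separation would furnish a nonzero common root $z=x-y$ of the sequence, which (provided $\dim H > 1$) directly contradicts phase retrievability. The degenerate case $\dim H \leq 1$ would have to be handled by hand, but it is trivial, because in that case phase retrievability already forces some $\Lambda_k$ to be a nonzero scalar multiplication and therefore injective on its own. I prefer the first, direct route, since it avoids a case split on $\dim H$ and makes transparent that the result is really just a rephrasing of the introductory observation that phase retrievability kills common zeros.
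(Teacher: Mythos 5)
Your direct argument is correct: interpreting ``separates points'' as injectivity of $x\mapsto(\Lambda_k(x))_{k\in\Bbb{Z}}$, linearity reduces it to showing the common kernel is trivial, and applying the definition of phase retrievability to the pair $(z,0)$ with $z=x-y$ forces $z=\alpha\cdot 0=0$. This is exactly the observation already recorded in the introduction, so your proof is a self-contained one-liner from the definition. The paper instead routes the corollary through Theorem \ref{j2-7}: if a nonzero $x$ satisfied $\Lambda_k(x)=0$ for all $k$, one gets $\mathrm{Re}\langle\Lambda_k(x),\Lambda_k(y)\rangle=0$ for a suitable second vector $y$ with $y\notin i\Bbb{R}x$ (e.g.\ $y=x$ works, since $x=iax$ with $a\in\Bbb{R}$ forces $x=0$), contradicting phase retrievability. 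What the paper's route buys is consistency with the rest of Section 2, where everything is driven by the characterization in Theorem \ref{j2-7}; what your route buys is that it needs no machinery at all and no dimension hypothesis, unlike your alternative contrapositive via Corollary \ref{j2-123}, which, as you correctly note, requires $\dim H>1$ and a separate (easy) low-dimensional check. Your preferred direct argument has no gap.
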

\begin{proof}
Let $\{\Lambda_k\}_{k \in \Bbb{Z}}$ be a phase retrievable  sequence.
If $x$ is a nonzero element of $H$,
then by Theorem \ref{j2-7} there exists $k_0\in\Bbb{Z}$
such that $\Lambda_{k_0}(x)\neq 0$.
This implies that
$\{\Lambda_k\}_{k \in \Bbb{Z}}$
separates points of  $H$.
\end{proof}
Let us recall that
$\ell_2^d(\Bbb{C})$
is the set of all
$(x_1, x_2, \ldots, x_d)$
such that
$x_i \in \Bbb{C}$
 and  $\sum_{i=1}^{d} |x_i|^2 < \infty.$
For a sequence
$\{ \Lambda_k \}_{k \in \Bbb{Z}}$
in
${\cal L}(\ell_2^d(\Bbb{C}))$ and $x\in \ell_2^d(\Bbb{C})$,
let
\begin{eqnarray*}
W_x:= \overline{span} \{ Re(\Lambda_k(x)) \oplus Im (\Lambda_k (x)): k \in \Bbb{Z} \}.
\end{eqnarray*}
For every $x, y\in \ell_2^d(\Bbb{C})$ we have
\begin{eqnarray*}
2Re\langle \Lambda_k (y), \Lambda_k(x) \rangle &=&\langle \Lambda_k (y), \Lambda_k(x) \rangle + \overline{\langle \Lambda_k (y), \Lambda_k(x) \rangle }\\
&=& \langle Re(\Lambda_k (y)) \oplus Im(\Lambda_k(y)),  Re(\Lambda_k (x)) \oplus Im(\Lambda_k(x)) \rangle.
\end{eqnarray*}
This implies that
\begin{eqnarray*}
Re\langle \Lambda_k (y), \Lambda_k(x) \rangle=0
\end{eqnarray*}
if and only if
\begin{eqnarray*}
Re(\Lambda_k (y)) \oplus Im(\Lambda_k(y))\in W_x^\perp
\end{eqnarray*}
for all $k\in\Bbb{Z}$;
 or equivalently,
\begin{eqnarray*}
W_y\subseteq W_x^\perp.
\end{eqnarray*}
 If $z=ix$,
then
$Re\langle \Lambda_k (z), \Lambda_k(x) \rangle=0$.
This shows that $W_{ix}$ is a one dimensional subspace of $W_x^\perp$.
So
$W_x^{\perp}$ is non-empty and
$dim W_x^{\perp} \geq 1$.
 Note that $dim W_x^{\perp}>1$ if and only if $W_x^\perp\setminus W_{ix}$ is non-empty.
\begin{theorem}\label{nescafe}
 Let $\{\Lambda_k\}_{k \in \Bbb{Z}}$ be a sequence in ${\mathcal L}(\ell_2^d(\Bbb{C}))$.
 Then $\{\Lambda_k\}_{k \in \Bbb{Z}}$
is not phase retrieval if and only if there exist nonzero elements
$x, y\in \ell_2^d(\Bbb{C})$
 such that $y\not\in i\Bbb{R}x$
and
 $W_y$ is contained in $W_x^\perp.$
\end{theorem}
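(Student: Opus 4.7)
The plan is to reduce Theorem~\ref{nescafe} directly to Theorem~\ref{j2-7} via the geometric reformulation that the paragraph preceding the statement has already set up. That reformulation shows, for any $x,y \in \ell_2^d(\mathbb{C})$, the equivalence
\begin{equation*}
\mathrm{Re}\langle \Lambda_k(y), \Lambda_k(x) \rangle = 0 \quad (\forall k \in \mathbb{Z}) \iff W_y \subseteq W_x^\perp,
\end{equation*}
obtained by writing $2\,\mathrm{Re}\langle \Lambda_k(y),\Lambda_k(x)\rangle$ as a real inner product of $\mathrm{Re}(\Lambda_k(y))\oplus \mathrm{Im}(\Lambda_k(y))$ with $\mathrm{Re}(\Lambda_k(x))\oplus \mathrm{Im}(\Lambda_k(x))$. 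Given this, the content of Theorem~\ref{nescafe} is essentially a translation of the orthogonality condition in Theorem~\ref{j2-7} into the language of the subspaces $W_x, W_y$.

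First, I would handle the small technicality that Theorem~\ref{j2-7} is stated with the exclusion $x \notin i\mathbb{R}y$, whereas Theorem~\ref{nescafe} is stated with $y \notin i\mathbb{R}x$. I would observe that for nonzero $x,y$, these two conditions are equivalent: if $x = ir\,y$ with $r \in \mathbb{R}$, then $r \neq 0$ (else $x = 0$), so $y = i(-1/r)\,x \in i\mathbb{R}x$, and the reverse direction is symmetric. Hence the two exclusion clauses are interchangeable.

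Next, for the forward direction, I would assume $\{\Lambda_k\}_{k\in\mathbb{Z}}$ is not phase retrieval. By Theorem~\ref{j2-7}, there exist nonzero $x, y \in \ell_2^d(\mathbb{C})$ with $x \notin i\mathbb{R}y$ (equivalently $y \notin i\mathbb{R}x$) and $\mathrm{Re}\langle \Lambda_k(x), \Lambda_k(y)\rangle = 0$ for all $k$. Applying the displayed equivalence above (with the roles of $x$ and $y$ as in the statement), the latter condition translates to $W_y \subseteq W_x^\perp$, which is exactly what we need.

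For the converse, I would assume the existence of nonzero $x, y$ with $y \notin i\mathbb{R}x$ and $W_y \subseteq W_x^\perp$. Running the equivalence in reverse yields $\mathrm{Re}\langle \Lambda_k(y), \Lambda_k(x)\rangle = 0$ for all $k \in \mathbb{Z}$, and the exclusion $y \notin i\mathbb{R}x$ is equivalent to $x \notin i\mathbb{R}y$. The hypothesis of Theorem~\ref{j2-7} is therefore satisfied, and we conclude that $\{\Lambda_k\}_{k\in\mathbb{Z}}$ is not phase retrieval. Since the core machinery is already in place, there is no real obstacle here; the only point requiring a moment's care is the symmetry of the exclusion $i\mathbb{R}x \leftrightarrow i\mathbb{R}y$ for nonzero vectors.
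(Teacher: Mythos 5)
Your proposal is correct and follows essentially the same route as the paper: both reduce the statement to Theorem~\ref{j2-7} via the equivalence $\mathrm{Re}\langle \Lambda_k(y), \Lambda_k(x)\rangle = 0$ for all $k$ iff $W_y \subseteq W_x^\perp$, established in the paragraph preceding the theorem. Your explicit check that $x \notin i\mathbb{R}y$ and $y \notin i\mathbb{R}x$ are interchangeable for nonzero vectors is a small detail the paper leaves implicit, but it does not change the argument.
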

\begin{proof}
Suppose that 
$\{\Lambda_k\}_{k \in \Bbb{Z}}$ is not phase retrieval. 
By Theorem \ref{j2-7},
there are nonzero elements
$x, y\in \ell^2_d(\Bbb{C})$
such that $y\not\in i\Bbb{R}x$
and
\begin{eqnarray*}
Re\langle \Lambda_k (y), \Lambda_k(x) \rangle=0.
\end{eqnarray*}
This shows that $W_y\subseteq W_x^\perp.
$ A similar argument proves the converse.
\end{proof}

As an immediate consequence of
 Corollary \ref{j2-123}
 we have the following result.
\begin{corollary}
Let $\{\Lambda_k\}_{k \in \Bbb{Z}}$
 be a sequence in ${\mathcal L}(\ell_2^d(\Bbb{C}))$
and $d>1$.
If there exists a nonzero element
$x\in \ell_2^d(\Bbb{C})$
such that $W_y=\{0\}$.
 Then
$\{\Lambda_k\}_{k \in \Bbb{Z}}$
is not phase retrieval.
\end{corollary}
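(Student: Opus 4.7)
The plan is very short because the result is flagged as an immediate consequence of Corollary~\ref{j2-123}. The obvious reading (in the presence of an apparent typo) is that $W_x=\{0\}$ for the quantified nonzero $x$, so the task is simply to deduce from $W_x=\{0\}$ that $x$ is a common root of all the $\Lambda_k$'s, and then to quote Corollary~\ref{j2-123}.

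First I would unpack the definition of $W_x$. By construction
\[
W_x=\overline{\mathrm{span}}\{\mathrm{Re}(\Lambda_k(x))\oplus\mathrm{Im}(\Lambda_k(x)):k\in\mathbb{Z}\},
\]
and this closed linear span vanishes if and only if every generator vanishes, i.e.\ $\mathrm{Re}(\Lambda_k(x))\oplus\mathrm{Im}(\Lambda_k(x))=0$ for all $k\in\mathbb{Z}$. Splitting real and imaginary parts yields $\mathrm{Re}(\Lambda_k(x))=0$ and $\mathrm{Im}(\Lambda_k(x))=0$, whence $\Lambda_k(x)=0$ for every $k\in\mathbb{Z}$.

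Thus the nonzero vector $x$ is a common root of the family $\{\Lambda_k\}_{k\in\mathbb{Z}}$. Since the hypothesis $d>1$ says precisely $\dim\ell_2^d(\mathbb{C})>1$, Corollary~\ref{j2-123} applies and delivers the conclusion that $\{\Lambda_k\}_{k\in\mathbb{Z}}$ is not phase retrieval.

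There is no real obstacle here; the only substantive point is the observation that the closed span of a family of vectors is $\{0\}$ iff every vector in the family is $0$, together with noticing that $\mathrm{Re}(\Lambda_k(x))\oplus\mathrm{Im}(\Lambda_k(x))=0$ forces $\Lambda_k(x)=0$. Everything else is a direct citation of Corollary~\ref{j2-123}. (If one wished, one could instead invoke Theorem~\ref{nescafe} by taking $y=\lambda x$ for some $\lambda\in\mathbb{C}\setminus i\mathbb{R}$, since then $W_y=\lambda W_x=\{0\}\subseteq W_x^\perp$ and $y\not\in i\mathbb{R}x$; but the route through Corollary~\ref{j2-123} is shorter and is the one the authors advertise.)
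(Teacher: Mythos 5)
Your proof is correct and follows exactly the route the paper intends: the paper gives no proof, merely announcing the corollary as an immediate consequence of Corollary~\ref{j2-123}, and your argument (reading the hypothesis as $W_x=\{0\}$, observing that this forces $\Lambda_k(x)=0$ for all $k$, and then citing Corollary~\ref{j2-123}) is precisely that deduction. You also correctly identified the typo $W_y$ for $W_x$ in the statement.
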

\begin{proposition}\label{j2-9}
Let $\{\Lambda_k\}_{k \in \Bbb{Z}}$ be a sequence in ${\mathcal L}(\ell_2^d(\Bbb{C}))$.
If there exist nonzero elements
$x, y\in \ell_2^d(\Bbb{C})$
such that $W_y$ is contained in
$W_x^{\perp}\setminus W_{ix}$,
then $\{\Lambda_k\}_{k \in \Bbb{Z}}$
is not phase retrieval.
The converse holds if
 $\Lambda_{k_0}$
 is injective for some $k_0\in\Bbb{Z}$.
 In this case, $dim W_x^{\perp}>1$.
\end{proposition}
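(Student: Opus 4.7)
The plan is to deduce both implications from Theorem \ref{nescafe}, reading the condition ``$W_y$ is contained in $W_x^\perp \setminus W_{ix}$'' as the conjunction $W_y \subseteq W_x^\perp$ and $W_y \not\subseteq W_{ix}$ (the literal set containment is impossible, since $0\in W_y$). The main tool is the equivalence, recorded just before Theorem \ref{nescafe}, between $W_y \subseteq W_x^\perp$ and $\hbox{Re}\langle \Lambda_k(y), \Lambda_k(x)\rangle = 0$ for every $k$.

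For the forward direction, I would move from the hypothesis to this orthogonality condition and then rule out $y \in i\Bbb{R}x$: a relation $y = iax$ with $a \in \Bbb{R}\setminus\{0\}$ would give $\Lambda_k(y) = ia\Lambda_k(x)$ for each $k$, so each generator of $W_y$ is $a$ times the corresponding generator of $W_{ix}$, forcing the forbidden containment $W_y \subseteq W_{ix}$. Theorem \ref{j2-7} then yields that $\{\Lambda_k\}_{k \in \Bbb{Z}}$ is not phase retrieval.

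For the converse, Theorem \ref{nescafe} supplies nonzero $x, y$ with $y \notin i\Bbb{R}x$ and $W_y \subseteq W_x^\perp$, so only the extra condition $W_y \not\subseteq W_{ix}$ needs checking. Arguing by contradiction, assume $W_y \subseteq W_{ix}$. Injectivity of $\Lambda_{k_0}$ makes $\hbox{Re}(\Lambda_{k_0}(ix)) \oplus \hbox{Im}(\Lambda_{k_0}(ix))$ a nonzero vector which, combined with the one-dimensionality of $W_{ix}$ noted before Theorem \ref{nescafe}, spans $W_{ix}$. Hence the $k_0$-th generator of $W_y$ equals $c$ times it for some $c \in \Bbb{R}$, so $\Lambda_{k_0}(y) = c\Lambda_{k_0}(ix) = \Lambda_{k_0}(cix)$; injectivity of $\Lambda_{k_0}$ forces $y = cix \in i\Bbb{R}x$, contradicting the choice of $y$. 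The bound $\dim W_x^\perp > 1$ then follows, since $W_x^\perp$ contains both the one-dimensional $W_{ix}$ and some $v \in W_y$ with $v \notin W_{ix}$, giving two linearly independent vectors.

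The main obstacle is the converse direction, where the delicate step is upgrading the set-theoretic containment $W_y \subseteq W_{ix}$ to the algebraic identity $y = cix$; this is precisely where the one-dimensionality of $W_{ix}$ and the injectivity of $\Lambda_{k_0}$ must be combined. The forward direction and the dimension count are routine consequences of Theorem \ref{nescafe} once the correct interpretation of the set-theoretic condition is fixed.
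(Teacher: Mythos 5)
Your proof is correct and follows essentially the same route as the paper: both directions are reduced to Theorem \ref{nescafe}, and the converse is settled by showing that the $k_0$-th generator of $W_y$ cannot lie in $W_{ix}$, since injectivity of $\Lambda_{k_0}$ would otherwise force $y = iax$ (the paper writes out exactly this computation with $\Lambda_{k_0}(y) = \Lambda_{k_0}(iax)$). You are in fact somewhat more careful than the paper in two places it glosses over: making explicit the intended reading of the set-theoretic condition ($W_y \subseteq W_x^{\perp}$ together with $W_y \not\subseteq W_{ix}$, since the literal containment is vacuous because $0 \in W_y$), and supplying the argument in the forward direction that rules out $y \in i\Bbb{R}x$ before invoking Theorem \ref{j2-7}.
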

\begin{proof}
Let there exist $k_0\in\Bbb{Z}$ with $\Lambda_{k_0}$ be injective.
Suppose that $\{\Lambda_k\}_{k \in \Bbb{Z}}$
is not phase retrieval.
By Theorem \ref{nescafe}, there are nonzero elements $x, y\in \ell^2_d(\Bbb{C})$
such that
$y\not\in i\Bbb{R}x$
and
$W_y$ is contained in $W_x^\perp.$
Let
\begin{eqnarray*}
Re(\Lambda_{k_0} (y)) \oplus Im(\Lambda_{k_0} (y)) \in W_{ix}.
\end{eqnarray*}
This shows that
\begin{eqnarray*}
Re(\Lambda_{k_0} (y)) = -a Im(\Lambda_{k_0} (x)) ~~~ \text{and} ~~~ Im(\Lambda_{k_0} (y))= a Re(\Lambda_{k_0} (x))
\end{eqnarray*}
for some $a\in\Bbb{R}$.
So
\begin{eqnarray*}
\Lambda_{k_0}(y) &=& Re(\Lambda_{k_0} (y)) + i Im(\Lambda_{k_0} (y)) \\
&=& -a Im(\Lambda_{k_0} (x)) + ia Re(\Lambda_{k_0} (x))  \\
&=& a Re(\Lambda_{k_0} (ix)) + a Im(\Lambda_{k_0} (ix))  \\
&=&\Lambda_{k_0} (iax).
\end{eqnarray*}
By injectivity of $\Lambda_{k_0}$,
we have $y=iax$, a contradiction.
Thus
$$W_y \subset W^{\perp}_x \setminus W_{ix}.$$
Therefore,
$W^{\perp}_x \setminus W_{ix}$
is non-empty which implies that
$dim{W_x^\perp}>1$.
The remain of proof follows from Theorem \ref{nescafe}.
\end{proof}

Let us recall that for every $x, y\in H$, the bounded operator $x\otimes y$ on $H$ is defined by
 $(x \otimes  y)(z) = \langle z, y \rangle x$
for all $z \in H$.
\begin{proposition}\label{j2-11}
 Let $\{ \Lambda_k \}_{k=1}^n$ be a sequence in
${\cal L}(H)$.
If  $\{\Lambda_k \}_{k =1}^n$
 is phase retrieval,  then
$\sum_{k =1}^n(\Lambda_k^*\Lambda_k(x) \otimes \Lambda_k^*\Lambda_k(x))$
is injective  for all nonzero vector $x \in H$.
\end{proposition}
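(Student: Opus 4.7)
The plan is to show that the positive operator $T_x := \sum_{k=1}^{n}\Lambda_k^{*}\Lambda_k(x)\otimes \Lambda_k^{*}\Lambda_k(x)$ has trivial kernel by pairing it with an arbitrary vector and using positivity to transfer information about $T_x(z)=0$ into a condition on the inner products $\langle \Lambda_k(z),\Lambda_k(x)\rangle$, after which Theorem \ref{j2-7} does the work.

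First, I would compute $\langle T_x(z),z\rangle$ directly from the definition of $u\otimes u$: each summand is a rank-one positive operator, so
\begin{eqnarray*}
\langle T_x(z),z\rangle \;=\; \sum_{k=1}^{n}|\langle z,\Lambda_k^{*}\Lambda_k(x)\rangle|^{2}\;=\; \sum_{k=1}^{n}|\langle \Lambda_k(z),\Lambda_k(x)\rangle|^{2}.
\end{eqnarray*}
Assuming $T_x(z)=0$, this scalar quantity vanishes, which forces $\langle \Lambda_k(z),\Lambda_k(x)\rangle=0$ for every $k\in\{1,\dots,n\}$, and in particular $\hbox{Re}\langle \Lambda_k(z),\Lambda_k(x)\rangle=0$ for every such $k$.

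Next, I would argue by contradiction. Suppose $z\ne 0$. Then $x$ and $z$ are both nonzero and satisfy $\hbox{Re}\langle \Lambda_k(z),\Lambda_k(x)\rangle=0$ for all $k$. By Theorem \ref{j2-7}, since $\{\Lambda_k\}_{k=1}^{n}$ is phase retrieval, this forces $z\in i\Bbb{R}x$; write $z=iax$ for some $a\in\Bbb{R}$. Substituting into the orthogonality condition yields $ia\|\Lambda_k(x)\|^{2}=0$ for every $k$. By Corollary \ref{j2-8} the phase retrievable sequence separates points of $H$, so from $x\neq 0$ there exists $k_{0}$ with $\Lambda_{k_0}(x)\neq 0$; hence $a=0$, i.e.\ $z=0$, which is the required contradiction. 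Thus $T_x$ is injective for every nonzero $x\in H$.

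I do not expect any serious obstacle here: the only non-trivial step is the initial positivity manipulation that converts $T_x(z)=0$ into the inner-product orthogonality condition, and once this is in place the result is an immediate application of the already-established Theorem \ref{j2-7} together with the separation-of-points corollary. A small care point is that Theorem \ref{j2-7} is stated for the index set $\Bbb{Z}$, but its proof is purely pointwise in the index and hence applies verbatim to the finite family $\{\Lambda_k\}_{k=1}^{n}$ (equivalently, one may extend by zero operators without affecting phase retrievability).
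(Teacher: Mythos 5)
Your proof is correct and follows essentially the same route as the paper's: a kernel vector $z$ of the positive operator yields $\langle \Lambda_k(z),\Lambda_k(x)\rangle=0$ for all $k$, and then Theorem \ref{j2-7} together with the separation property of Corollary \ref{j2-8} gives the contradiction. If anything, your write-up is slightly more complete, since the paper passes from non-injectivity to these orthogonality relations without spelling out the pairing argument $\langle T_x z,z\rangle=0$ that you supply, and you also note explicitly that the finite index set causes no difficulty.
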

\begin{proof}
Suppose that
$\sum_{k=1}^n (\Lambda_k^*\Lambda_k(x) \otimes \Lambda_k^*\Lambda_k(x))$
is not injective for some
$x \not =0$.
Then there exists $y \not =0$ such that
\begin{eqnarray*}
\langle \Lambda_k(y), \Lambda_k(x) \rangle=0.
\end{eqnarray*}
It follows that
\begin{eqnarray*}
Re\langle \Lambda_k(y), \Lambda_k(x) \rangle=0
\end{eqnarray*}
for all $k=1, \ldots, n$.
If $y = iax$  for some  $a \in  \Bbb{R}$,
then $\langle \Lambda_k(y), \Lambda_k(x) \rangle=0$,
which shows that $\Lambda_k(x) =0$.
This contradicts Corollary \ref{j2-8}.
So $y \not \in  i \Bbb{R} x$.
Therefore, $\{\Lambda_k \}_{k \in \Bbb{Z}}$ is
not phase retrieval, a contradiction.
\end{proof}
\begin{theorem}
Let $\{\Lambda_k\}_{k \in \Bbb{Z}}$ be a sequence in $\mathcal{L}(H)$ and let $T\in \mathcal{L}(H)$ be injective.
Then  $\{\Lambda_k\}_{k \in \Bbb{Z}}$ is phase retrieval if and only if $\{\Lambda_k T\}_{k \in \Bbb{Z}}$ is  phase retrieval.
\end{theorem}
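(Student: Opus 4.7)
My plan is to handle the two implications separately, with the forward one essentially immediate and the reverse one going through the characterization proved in Theorem~\ref{j2-7}.

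For the forward direction, suppose $\{\Lambda_k\}_{k\in\mathbb{Z}}$ is phase retrieval and $\|\Lambda_k Tx\| = \|\Lambda_k Ty\|$ for every $k\in\mathbb{Z}$. Applying the phase retrieval property of $\{\Lambda_k\}$ to the pair $Tx, Ty$ yields $Tx = \alpha Ty$ for some $\alpha\in\mathbb{C}$ with $|\alpha|=1$. Since $T$ is injective, $T(x-\alpha y)=0$ forces $x = \alpha y$. This is a one-line calculation and is exactly where the hypothesis that $T$ is injective is genuinely used.

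For the reverse direction I would argue by contraposition using Theorem~\ref{j2-7}. If $\{\Lambda_k\}$ fails to be phase retrieval, the theorem provides nonzero vectors $x, y\in H$ with $x\notin i\mathbb{R}y$ and $\operatorname{Re}\langle \Lambda_k x, \Lambda_k y\rangle = 0$ for all $k$. The goal is to exhibit nonzero $u, v\in H$ with $u\notin i\mathbb{R}v$ and $\operatorname{Re}\langle \Lambda_k Tu, \Lambda_k Tv\rangle = 0$ for all $k$, so that Theorem~\ref{j2-7} then rules out phase retrievability of $\{\Lambda_k T\}$. The natural choice is to take $u, v$ with $Tu = x$ and $Tv = y$; the condition $u\notin i\mathbb{R}v$ then follows cleanly from the injectivity of $T$, since $u = irv$ for some $r\in\mathbb{R}$ would give $x = Tu = irTv = iry$, contradicting $x\notin i\mathbb{R}y$.

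The main obstacle is the lifting step in the reverse direction: selecting preimages $u, v\in H$ with $Tu = x$ and $Tv = y$. This is automatic when $x, y\in\operatorname{Range}(T)$, which in particular is free in the finite-dimensional settings of the paper (e.g.\ $H=\ell_2^d(\mathbb{C})$), where injectivity of $T$ forces surjectivity. In the general infinite-dimensional case one would need either to argue via a density/closed-range argument, or to strengthen the hypothesis; I would verify this step carefully before declaring the iff complete, as the forward direction really only needs injectivity while the reverse pull-back implicitly leans on $T$ being onto.
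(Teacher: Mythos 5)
Your forward direction is correct and complete: it uses only the definition of phase retrieval plus injectivity of $T$, and it is in fact more direct than the paper's own argument, which routes this implication through Theorem~\ref{j2-7} (contrapositive: from $\operatorname{Re}\langle\Lambda_kTx,\Lambda_kTy\rangle=0$ with $x,y\neq0$ and $y\notin i\mathbb{R}x$, injectivity gives $Tx,Ty\neq0$ and $Ty\notin i\mathbb{R}Tx$). Either route is fine for this half.

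The obstacle you flag in the reverse direction is a genuine gap, and it cannot be closed under the stated hypotheses: your lifting step needs $x,y\in\operatorname{Range}(T)$, and neither density nor closed range saves it --- what is really needed is surjectivity. Concretely, let $S$ be the unilateral shift on $\ell^2$ (injective, closed range, not onto), let $\{\Gamma_k\}$ be any phase retrievable sequence on $\ell^2$, and put $\Lambda_k:=\Gamma_kS^*$. Then $\Lambda_kS=\Gamma_kS^*S=\Gamma_k$, so $\{\Lambda_kS\}$ is phase retrieval, while every $\Lambda_k$ annihilates $e_1$, so $\{\Lambda_k\}$ is not (Corollary~\ref{j2-123}). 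Thus the implication ``$\{\Lambda_kT\}$ phase retrieval $\Rightarrow\{\Lambda_k\}$ phase retrieval'' genuinely fails for merely injective $T$ in infinite dimensions; it holds when $T$ is also surjective (hence invertible), or when $\dim H<\infty$ as you observe. Note that the paper's own proof of this direction suffers from the same defect: it invokes an operator $\tilde T$ with $\|\Lambda_kT\tilde Tx\|=\|\Lambda_kx\|$, i.e.\ a bounded right inverse $T\tilde T=I$, which exists only when $T$ is onto (compare Proposition~\ref{j2-2}, where surjectivity is assumed). So your hesitation is the correct verdict: your two arguments prove the equivalence for invertible $T$ (or in finite dimensions), but the ``if'' half of the theorem as stated does not follow, and in fact is false, for a general injective $T$.
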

\begin{proof}
Suppose that there exist nonzero elements $x, y\in H$ such that $y\not\in i\Bbb{R}x$ and $Re\langle \Lambda_k T( x), \Lambda_k T(y)\rangle=0$. Since $T$ is injective, $Ty\not\in i\Bbb{R}Tx$. Thus if $\{\Lambda_k\}_{k \in \Bbb{Z}}$ is phase retrieval, then $\{\Lambda_k T\}_{k \in \Bbb{Z}}$ is  phase retrieval.

Conversely, if for every $x,y\in H$ and $k\in\Bbb{Z}$,
$\|\Lambda_k (x)\|=\|\Lambda_k (y) \|$,
then there exists
 $\tilde{T}\in \mathcal{L}(H)$
such that
 $\|\Lambda_kT ( \tilde{T}x)\|=\|\Lambda_k T ( \tilde{T}y) \|$. Since
$\{\Lambda_k T\}_{k \in \Bbb{Z}}$ is phase retrieval
and $T$ is injective,
 $x = \alpha y$ with $|\alpha|=1$. Therefore, $\{\Lambda_k\}_{k \in \Bbb{Z}}$ is phase retrieval.
\end{proof}
In the following, let $H$  denote a separable infinite dimensional Hilbert space and  $\{H_k\}_{k \in \Bbb{Z}}$  be a sequence of subspaces of $H$.
Let $ \Lambda_k$ be a linear operator from $H$ into $H_k$ for all $k \in \Bbb{Z}$. Then the sequence  $\{\Lambda _k \} _{k \in \Bbb{Z}}$ is said to be a
\emph{$g$-frame for $H$ with respect to}
$\{H_k\}_{k \in \Bbb{Z}}$ (briefly, $g$-frame)
 if there exist two constants  $0 < A \leq  B$ such that
\begin{eqnarray*}
A ~\|  x\|^ 2 \leq  \sum_{k \in \Bbb{Z}} \| \Lambda _k (x) \|^2 \leq B ~ \|x\|^2.
\end{eqnarray*}
\begin{proposition}\label{j2-2}
Let $\{\Lambda_k\}_{k \in \Bbb{Z}}$ be a sequence in $\mathcal{L}(H)$ and let $T\in \mathcal{L}(H)$ be surjective.
Then  $\{\Lambda_k\}_{k \in \Bbb{Z}}$ is a phase retrievable $g$-frame if and only if $\{\Lambda_k T\}_{k \in \Bbb{Z}}$ is  a phase retrievable $g$-frame.
\end{proposition}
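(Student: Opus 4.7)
The plan is to treat in parallel the two ingredients of ``phase retrievable $g$-frame''---the two-sided $g$-frame inequality and the phase retrieval property---and check each in each direction. Phase retrievability will be inherited by choosing preimages under $T$ (possible by surjectivity) and invoking the earlier PR-preservation theorem; the $g$-frame bounds will be controlled via a bounded right inverse of $T$.

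The key tool is that a surjective $T\in\mathcal{L}(H)$ on a Hilbert space makes $TT^*$ invertible, so the Moore--Penrose right inverse $T^{\dagger}:=T^*(TT^*)^{-1}\in\mathcal{L}(H)$ exists and satisfies $TT^{\dagger}=I$. From $\|y\|=\|TT^{\dagger}y\|\le\|T\|\,\|T^{\dagger}y\|$ one gets $\|T^{\dagger}y\|\ge\|T\|^{-1}\|y\|$, so $T^{\dagger}$ is bounded below, and $\|T^{\dagger}y\|=\inf\{\|x\|:Tx=y\}$.

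For the direction $(\Leftarrow)$, assume $\{\Lambda_k T\}_{k\in\Bbb{Z}}$ is a phase retrievable $g$-frame with bounds $A',B'$. Given $y\in H$, set $x:=T^{\dagger}y$, so $Tx=y$ and $\|T\|^{-1}\|y\|\le\|x\|\le\|T^{\dagger}\|\,\|y\|$. Substituting,
\begin{equation*}
A'\|T\|^{-2}\|y\|^2\le A'\|x\|^2\le\sum_k\|\Lambda_k y\|^2=\sum_k\|\Lambda_k Tx\|^2\le B'\|x\|^2\le B'\|T^{\dagger}\|^2\|y\|^2,
\end{equation*}
which is the $g$-frame inequality for $\{\Lambda_k\}$. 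For phase retrievability, if $\|\Lambda_k u\|=\|\Lambda_k v\|$ for all $k$, use surjectivity to pick $\tilde u,\tilde v\in H$ with $T\tilde u=u$, $T\tilde v=v$; then $\|\Lambda_k T\tilde u\|=\|\Lambda_k T\tilde v\|$ for all $k$, so phase retrievability of $\{\Lambda_k T\}$ yields $\tilde u=\alpha\tilde v$ with $|\alpha|=1$, whence $u=T\tilde u=\alpha T\tilde v=\alpha v$.

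For the direction $(\Rightarrow)$, assume $\{\Lambda_k\}$ is a phase retrievable $g$-frame with bounds $A,B$. The upper bound $\sum_k\|\Lambda_k Tx\|^2\le B\|T\|^2\|x\|^2$ is immediate. The lower bound $\sum_k\|\Lambda_k Tx\|^2\ge A\|Tx\|^2\ge c\|x\|^2$ and the direct application of the earlier PR-preservation theorem (which requires an injective $T$) both need $T$ to be bounded below (equivalently, injective with closed range). This is the expected main obstacle: surjectivity alone gives no lower bound on $\|Tx\|$. The natural resolution is that the conclusion ``$\{\Lambda_k T\}$ is a $g$-frame'' itself forces $\ker T=\{0\}$---any $x\in\ker T$ would give $\sum_k\|\Lambda_k Tx\|^2=0$, violating the $g$-frame lower bound---so together with surjectivity $T$ is in fact invertible. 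Once this is in hand, $\sum_k\|\Lambda_k Tx\|^2\ge A\|Tx\|^2\ge A\|T^{-1}\|^{-2}\|x\|^2$ delivers the lower $g$-frame bound, and the preceding theorem applied to the injective $T$ delivers the phase retrieval property of $\{\Lambda_k T\}$.
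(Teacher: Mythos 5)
Your backward direction is correct and is essentially the paper's argument, but you are more careful than the paper about the frame bounds: by taking the minimal-norm preimage $x=T^{\dagger}y$ you control $\|x\|$ from both sides, which is exactly what the upper bound needs (the paper takes an arbitrary preimage $z'$ and writes a chain $A\|Tz'\|^2\le\sum_k\|\Lambda_kT z'\|^2\le B\|Tz'\|^2$ that does not literally follow from the $g$-frame inequality for $\{\Lambda_kT\}$, though it is repairable exactly as you repair it). The phase-retrieval part via preimages under $T$ coincides with the paper's.

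The forward direction is where the genuine problem lies, and you have diagnosed it correctly but not resolved it: under the stated hypothesis ($T$ surjective only) that implication is false. If $T$ has nontrivial kernel (e.g.\ the left shift on $\ell^2$), then for any nonzero $x\in\ker T$ one has $\sum_k\|\Lambda_kTx\|^2=0$, so $\{\Lambda_kT\}$ is neither a $g$-frame nor phase retrieval (take $u\in\ker T\setminus\{0\}$ and $v=2u$), no matter how good $\{\Lambda_k\}$ is. Your ``resolution''---deducing $\ker T=\{0\}$ from the very conclusion you are trying to prove---is circular and is not a proof; what it actually shows is that the proposition requires the stronger hypothesis that $T$ be invertible (equivalently, surjective and bounded below). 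For what it is worth, the paper's own proof commits precisely the error you anticipated: it asserts that surjectivity yields $\overline{T}\in\mathcal{L}(H)$ with $\|x\|=\|\overline{T}Tx\|\le\|\overline{T}\|\,\|Tx\|$, i.e.\ a bounded left inverse, which a merely surjective operator need not possess. So the paper's forward direction has the same hole; once $T$ is assumed invertible, both your argument and the paper's go through.
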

\begin{proof}
Let  $\{\Lambda_k \}_{k \in \Bbb{Z}}$ be a $g$-frame for $H$.
Then there exists $A, B>0$ such that for  every $\in H$,
\begin{eqnarray*}
A \|x \|^2 \leq \sum_{k \in \Bbb{Z}} \|\Lambda_k(x)\|^2 \leq B \|x\|^2.
\end{eqnarray*}
If $T$ is surjective,
 then $\|x\|=\|\overline{T} T x \| \leq \|\overline{T}\|  \| T x \|$ for some $\overline{T}\in \mathcal{L}(H)$. So
\begin{eqnarray*}
 \sum_{k \in \Bbb{Z}} \|\Lambda_k T (x)\|^2 &=&  \sum_{k \in \Bbb{Z}} \|\Lambda_k (T x)\|^2 \\
&\leq& B \|T x \|^2 \\
&\leq& B \|T\|^2 \|x\|^2
\end{eqnarray*}
and
\begin{eqnarray*}
 \sum_{k \in \Bbb{Z}} \|\Lambda_k T (x)\|^2  &\geq& A \|Tx \|^2 \geq A \|\overline{T}\|^{-2} \|x\|^2
\end{eqnarray*}
for all $x\in H$. Hence
$\{\Lambda_k T\}_{k \in \Bbb{Z}}$ is a $g$-frame. Now, let for every $x, y\in H$ and $k\in\Bbb{Z}$,
$\|\Lambda_k(Tx)\|=\|\Lambda_k(Ty)\|$.
Since
$\{\Lambda_k\}_{k \in \Bbb{Z}}$ is  phase retrieval,
there exists $\alpha\in \Bbb{C}$ such that
$|\alpha|=1$
and
$Tx= \alpha Ty$.
Surjectivity of $T$ implies that $x=\alpha y$.
Therefore, $\{\Lambda_k T\}_{k \in \Bbb{Z}}$ is a phase retrievable $g$-frame.

To prove the converse, let $z\in H$. Then $Tz^\prime=z$ for some $z^\prime\in H$. If $\{\Lambda_k T\}_{k \in \Bbb{Z}}$ is a $g$-frame, then there exist $A, B>0$ such that
\begin{eqnarray*}
A\|z\|^2&=&A\|Tz^\prime\|^2\\
&\leq&\sum_{k \in \Bbb{Z}}\|\Lambda_kT(z^\prime)\|^2\\
&=&\sum_{k \in \Bbb{Z}}\|\Lambda_k(z)\|^2\\
&\leq& B\|z\|^2.
\end{eqnarray*}
This shows that $\{\Lambda_k\}_{k \in \Bbb{Z}}$ is a $g$-frame.  Suppose that $\{\Lambda_k\}_{k \in \Bbb{Z}}$
 is not phase retrieval. Thus there exists nonzero elements $x, y\in H$ such that $y\not\in i\Bbb{R}x$ and
\begin{eqnarray*}
Re\langle\Lambda_k(x), \Lambda_k(y)\rangle=0.
\end{eqnarray*}
Choose $x^\prime, y^\prime\in H$ with
\begin{eqnarray*}
Tx^\prime=x\quad\hbox{and}\quad Ty^\prime=y.
\end{eqnarray*}
Note that
\begin{eqnarray*}
Re\langle\Lambda_kT(x^\prime), \Lambda_kT(y^\prime)\rangle=0
\end{eqnarray*}
and $Ty^\prime\not\in Tx^\prime$. Hence $\{\Lambda_kT\}_{k \in \Bbb{Z}}$ is not phase retrieval.
\end{proof}
Let   $\{\Lambda_k \}_{k \in \Bbb{Z}}$ be a  $g$-frame for $H$. The operator  $S: H \rightarrow  H$  defined by
\begin{eqnarray} \nonumber
S(x) := \sum_{k \in \Bbb{Z}} {\Lambda}_{k}  ^{*}  \, {\Lambda}_{k} \, (x)   \nonumber
\end{eqnarray}
is called the  $\textit{frame operator}$ of $\{\Lambda _k \} _{k \in \Bbb{Z}}$.
It has been proved that  the operator $S$  is bounded,  positive and invertible.
For $g$-frame $\{\Lambda_k \}_{k \in \Bbb{Z}}$, the sequence  $\{\Lambda_k S^{-1}\}_{k \in \Bbb{Z}}$  is called the $\textit{canonical dual}$ of $\{\Lambda_k \}_{k \in \Bbb{Z}}$; see [17, 18] for more details.

\begin{corollary}\label{j2-4}
Let   $\{\Lambda_k \}_{k \in \Bbb{Z}}$ be a  $g$-frame for $H$.
 Then  the canonical dual of $\{\Lambda_k\}_{k \in \Bbb{Z}}$
 is  phase retrieval  if and only if  $\{\Lambda_k\}_{k \in \Bbb{Z}}$ is  phase retrieval.
\end{corollary}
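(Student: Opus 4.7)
The plan is to recognize that this corollary is an essentially immediate consequence of Proposition \ref{j2-2} applied to the operator $T = S^{-1}$, where $S$ is the frame operator of $\{\Lambda_k\}_{k\in\Z}$. Since the canonical dual is by definition $\{\Lambda_k S^{-1}\}_{k\in\Z}$, the statement becomes exactly the statement of Proposition \ref{j2-2} with this particular choice of $T$, provided we can verify that $T = S^{-1}$ meets the hypotheses of that proposition.

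First, I would invoke the fact, recalled just before the corollary, that the frame operator $S$ of a $g$-frame is bounded, positive, and invertible on $H$. Invertibility of $S$ immediately gives that $S^{-1} \in \mathcal{L}(H)$ and that $S^{-1}$ is a bijection of $H$ onto itself; in particular $S^{-1}$ is surjective, so the hypothesis of Proposition \ref{j2-2} is satisfied with $T := S^{-1}$.

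Then I would apply Proposition \ref{j2-2} directly: $\{\Lambda_k\}_{k\in\Z}$ is a phase retrievable $g$-frame if and only if $\{\Lambda_k S^{-1}\}_{k\in\Z}$ is a phase retrievable $g$-frame. Since the latter sequence is precisely the canonical dual, the corollary follows. The only mild subtlety worth flagging is that Proposition \ref{j2-2} is phrased for phase retrievable $g$-frames (so it bundles the $g$-frame property and the phase retrieval property together), but this is harmless here because in the corollary $\{\Lambda_k\}_{k\in\Z}$ is already assumed to be a $g$-frame, and the canonical dual of a $g$-frame is standardly known (and consistent with Proposition \ref{j2-2}) to again be a $g$-frame. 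I do not anticipate any real obstacle; the heart of the argument has already been done in Proposition \ref{j2-2}, and this corollary is the specialization $T = S^{-1}$.
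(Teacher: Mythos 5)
Your proposal is correct and follows exactly the route the paper intends: the corollary is stated without proof as an immediate consequence of Proposition \ref{j2-2} applied with $T=S^{-1}$, using that the frame operator $S$ is bounded, positive and invertible, hence $S^{-1}$ is surjective. Your added remark on the bundling of the $g$-frame and phase retrieval properties is a fair clarification but does not change the argument.
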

\section{Phase retrievable frames}
 A sequence $\{x_k\}_{k \in \Bbb{Z}}$  of elements of a separable infinite dimensional Hilbert space $H$ is  called  \emph{frame} if there exist constants
$0 < A \leq B$ such that
\begin{eqnarray} \nonumber
A \, \| x \|^2 \leq \sum_{k \in \Bbb{Z}} |\langle  x,x_k \rangle|^2 \leq  B\, \|x \|^2
\end{eqnarray}
 for all $x\in H$. For extensive of frames see [9]; see also [10] for $K$-frames, see [8] for fusion frames and [13, 14, 17] for $K$-$g$-frames and $g$-frames. Let the linear operator
$\Lambda_k: H\rightarrow H$ defined by
\begin{eqnarray*}
\Lambda_k(x)=\langle x, x_k\rangle x_k.
\end{eqnarray*}
One can easy prove that $\{\Lambda_k\}_{k \in \Bbb{Z}}$ is a phase retrievable $g$-frame  if and only if
$\{x_k\}_{k \in \Bbb{Z}}$ is a  phase retrievable frame.
\begin{theorem} \label{j1}
Let $T \in \mathcal{L}(H)$ be an injective operator.
If $\{x_k\}_{k \in \Bbb{Z}}$ is a phase retrievable frame for $H$, then
$\{Tx_k\}_{k \in \Bbb{Z}}$ is a phase retrievable frame.
The converse holds when $T$ is self-adjoint.
\end{theorem}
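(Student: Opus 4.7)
The plan is to work directly with the definition of phase retrieval, using the adjoint identity $\langle y, Tx_k\rangle = \langle T^*y, x_k\rangle$ to transfer statements about $\{Tx_k\}$ to statements about $\{x_k\}$ and vice versa.

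For the forward direction, assume $\{x_k\}$ is a phase retrievable frame with bounds $A\leq B$. The upper frame bound for $\{Tx_k\}$ is immediate, since
\[
\sum_{k\in\Bbb{Z}}|\langle y, Tx_k\rangle|^2 = \sum_{k\in\Bbb{Z}}|\langle T^*y, x_k\rangle|^2 \leq B\|T^*y\|^2 \leq B\|T\|^2\|y\|^2,
\]
and the lower bound is obtained from $\sum_k|\langle y, Tx_k\rangle|^2\geq A\|T^*y\|^2$ together with the injectivity hypothesis on $T$, in the spirit of Proposition~\ref{j2-2}. For the phase retrieval itself, if $|\langle y_1, Tx_k\rangle|=|\langle y_2, Tx_k\rangle|$ for all $k$, then the adjoint identity gives $|\langle T^*y_1, x_k\rangle|=|\langle T^*y_2, x_k\rangle|$; phase retrievability of $\{x_k\}$ then yields $T^*y_1=\alpha T^*y_2$ for some $\alpha\in\Bbb{T}$, and injectivity of $T$ allows one to conclude $y_1=\alpha y_2$.

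For the converse, assume $T$ is self-adjoint and injective, so that $T^*=T$ is injective and its range is dense in $H$. To obtain the frame inequalities for $\{x_k\}$, write $\sum_k|\langle Tz, x_k\rangle|^2=\sum_k|\langle z, Tx_k\rangle|^2$ for $z\in H$, apply the frame bounds of $\{Tx_k\}$, and extend to arbitrary $y\in H$ by density. For phase retrieval of $\{x_k\}$, given $|\langle y_1, x_k\rangle|=|\langle y_2, x_k\rangle|$ for every $k$, approximate each $y_i$ by elements of the range of $T$, convert the hypothesis into one about $\{Tx_k\}$ via $\langle Tz, x_k\rangle=\langle z, Tx_k\rangle$, apply phase retrievability of $\{Tx_k\}$ to obtain unimodular scalars $\alpha_n$ at each approximation stage, and pass to a convergent subsequence in $\Bbb{T}$ to produce a single unimodular $\alpha$ with $y_1=\alpha y_2$.

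The main obstacle I expect lies in the converse: extending conclusions from the dense range of $T$ to all of $H$ requires a careful density/continuity argument, and ensuring that the unimodular phases obtained at each approximation stage collapse to a single unimodular phase in the limit relies on compactness of $\Bbb{T}$. In the forward direction, the delicate point is the lower frame bound for $\{Tx_k\}$, which must invoke the injectivity of $T$ in a slightly stronger form (cf.\ Proposition~\ref{j2-2}), consistent with the conventions already employed in the paper.
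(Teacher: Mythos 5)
Your overall strategy, transferring the measurements through the adjoint identity $\langle y,Tx_k\rangle=\langle T^*y,x_k\rangle$, is the same as the paper's; the difference is that the paper routes every delicate step through a bounded operator $\overline{T}$ with $T\overline{T}=I=\overline{T}^{\,*}T^*$, and it is exactly at the points where that operator is used that your proposal has genuine gaps. In the forward direction you pass from $T^*y_1=\alpha T^*y_2$ to $y_1=\alpha y_2$ ``by injectivity of $T$''; but injectivity of $T$ does not give injectivity of $T^*$ (that is equivalent to density of the range of $T$: the unilateral shift is injective while its adjoint annihilates $e_1$), so this step does not follow from the stated hypothesis. Similarly, the lower frame bound for $\{Tx_k\}$ requires $\|T^*y\|\geq c\|y\|$, i.e.\ $T^*$ bounded below (equivalently $T$ surjective), not injectivity; your appeal to Proposition \ref{j2-2} ``in spirit'' is an appeal to a surjectivity hypothesis you do not have. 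The paper settles both points by cancelling with $\overline{T}^{\,*}T^*=I$; one may well object that producing such a $\overline{T}$ from injectivity alone is itself unjustified, but once $\overline{T}$ is granted the argument closes, whereas your version supplies no mechanism at all.

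The converse as you sketch it breaks down more seriously. For the frame bounds of $\{x_k\}$, the identity $\sum_k|\langle Tz,x_k\rangle|^2=\sum_k|\langle z,Tx_k\rangle|^2\leq B\|z\|^2$ controls the sum only by $\|z\|^2$, and without $T$ bounded below $\|z\|$ is not controlled by $\|Tz\|$, so the Bessel bound does not pass from the range of $T$ to $H$; moreover your ``extend by density'' step for the lower bound presupposes continuity of $y\mapsto\sum_k|\langle y,x_k\rangle|^2$, i.e.\ the very Bessel bound that is missing. Worse, the phase-retrieval part cannot work as described: the hypothesis $|\langle y_1,x_k\rangle|=|\langle y_2,x_k\rangle|$ is an exact equality and is not inherited by approximants $Tz_n^{(i)}\to y_i$, so there are no equalities $|\langle z_n^{(1)},Tx_k\rangle|=|\langle z_n^{(2)},Tx_k\rangle|$ to which phase retrievability of $\{Tx_k\}$ could be applied, hence no phases $\alpha_n$ to extract and nothing for the compactness of $\Bbb{T}$ to act on. The paper avoids approximation altogether: using $T=T^*$ and $T\overline{T}=I$ it rewrites $|\langle x,x_k\rangle|=|\langle T\overline{T}x,x_k\rangle|=|\langle \overline{T}x,Tx_k\rangle|$, applies phase retrievability of $\{Tx_k\}$ to get $\overline{T}x=\alpha\overline{T}y$, and then applies $T$. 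If you want your route to be airtight, you essentially need $T$ bounded below with dense range (i.e.\ invertible), which is what the paper's $\overline{T}$ silently encodes; with only injectivity, both your argument and the statement itself are in trouble.
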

\begin{proof}
Let $T$ be injective. Then there exists $\overline{T} \in \mathcal{L}(H)$ such that
\begin{eqnarray*}
T \overline{T}= I=\overline{T}^* T^*
\end{eqnarray*}
on $H$.
So for every $x \in H$ we have
\begin{eqnarray*}
\|x\| = \| T  \overline{T} x \| \leq \|T\| \|  \overline{T} x \| \quad\hbox{and}\quad\|x\| = \| \overline{T}^* T^* x\| \leq \|\overline{T}^* \| \|T^* x\|.
\end{eqnarray*}
It follows that
\begin{eqnarray} \label{j90}
 \|T \|^{-1} \|x\| \leq \| \overline{T} x \|
\end{eqnarray}
and $\|\overline{T}^* \|^{-1} \|x\| \leq \|T^*x\|$.
Assume that $\{x_k\}_{k \in \Bbb{Z}}$ is a frame for $H$.
Then there exist $A, B >0$ such that for every $x \in H$
\begin{eqnarray}
\sum_{k \in \Bbb{Z}} |\langle x, Tx_k \rangle |^2  &=&
\sum_{k \in \Bbb{Z}} |\langle T^*x, x_k \rangle |^2 \nonumber \\
&\geq& A \|T^* x \|^2 \nonumber \\
&\geq& A  \|\overline{T}^* \|^{-2} \|x\|^2  \nonumber
\end{eqnarray}
and
\begin{eqnarray*}
\sum_{k \in \Bbb{Z}} |\langle x, Tx_k \rangle |^2  =
\sum_{k \in \Bbb{Z}} |\langle T^*x, x_k \rangle |^2 \leq B \|T^*\|^2 \|x\|^2.
\end{eqnarray*}
Hence $\{Tx_k\}_{k \in \Bbb{Z}}$  is a frame for $H$.
Let  
$|\langle x, Tx_k \rangle |= |\langle y, Tx_k \rangle |$ for all $x, y\in H$ and $k \in \Bbb{Z}$. Then 
$|\langle T^*x, x_k \rangle |= |\langle T^*y, x_k \rangle |$. 
Now, since $\{x_k\}_{k \in \Bbb{Z}}$ is phase-retrieval, there exists $\alpha\in\Bbb{C}$ with $|\alpha|=1$ such that
\begin{eqnarray*} 
T^* x = \alpha T^* y. 
\end{eqnarray*}
Then $\overline{T}^* T^* x=  \alpha \overline{T}^* T^* y$. Therefore, $x=\alpha y$. 
That is, $\{Tx_k\}_{k \in \Bbb{Z}}$  is a  phase-retrieval frame. 

Conversely, let $\{Tx_k \}_{k \in \Bbb{Z}}$ be a frame.
Then there exists $A, B >0$ such that for every $x\in H$
\begin{eqnarray*}
\sum_{k \in \Bbb{Z}} |\langle x, x_k \rangle |^2 &=&
\sum_{k \in \Bbb{Z}} |\langle  T  \overline{T} x, x_k \rangle |^2 \nonumber \\
&=& \sum_{k \in \Bbb{Z}} |\langle  \overline{T} x, T^* x_k \rangle |^2 \nonumber \\
&=& \sum_{k \in \Bbb{Z}} |\langle  \overline{T} x, T x_k \rangle |^2. \nonumber
\end{eqnarray*}
From (\ref{j90}) we obtain
\begin{eqnarray*}
A \|T\|^{-2} \|x\|^2 & \leq& A \|\overline{T} x \|^2  \nonumber \\
&\leq& \sum_{k \in \Bbb{Z}} |\langle x, x_k \rangle |^2 =\sum_{k \in \Bbb{Z}} |\langle  \overline{T} x, T x_k \rangle |^2 \nonumber \\
&\leq& B \|\overline{T}\|^2 \|x\|^2.
\end{eqnarray*}
So, $\{x_k \}_{k \in \Bbb{Z}}$ is a  frame for $H$.
Now, let
$|\langle x, x_k \rangle | = |\langle y, x_k \rangle | $ for all $k \in \Bbb{Z}$ and $x,y \in H$.
Since $T$ is injective and self-adjoint, it follows that
\begin{eqnarray*}
|\langle \overline{T}x, Tx_k \rangle |= |\langle \overline{T}y, Tx_k \rangle |.
\end{eqnarray*}
If $\{Tx_k \}_{k \in \Bbb{Z}}$ is  phase retrieval, then there exists $\alpha\in\Bbb{C}$ with $|\alpha|=1$ and
\begin{eqnarray*}
 \overline{T}x = \alpha \overline{T} y.
\end{eqnarray*}
Consequently,
 $T \overline{T}x = \alpha  T \overline{T} y$.
Thus $\{x_k\}_{k \in \Bbb{Z}}$ is a phase retrievable frame.
\end{proof}
As a consequence of Theorem \ref{j1} we give the following result.
\begin{corollary} \label{j3}
Let  $\{x_k\}_{k \in \Bbb{Z}}$ be a frame for $H$.
The canonical dual of  $\{x_k\}_{k \in \Bbb{Z}}$  is
phase retrieval  if and only if $\{x_k\}_{k \in \Bbb{Z}}$ is phase retrieval. 
\end{corollary}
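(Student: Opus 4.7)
The plan is to derive this corollary as a direct application of Theorem \ref{j1} with $T = S^{-1}$, where $S$ is the frame operator of $\{x_k\}_{k \in \Bbb{Z}}$.

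First I would recall from the discussion immediately preceding Corollary \ref{j2-4} that the canonical dual of $\{x_k\}_{k \in \Bbb{Z}}$ is exactly the sequence $\{S^{-1} x_k\}_{k \in \Bbb{Z}}$, where $S(x) = \sum_{k \in \Bbb{Z}} \langle x, x_k\rangle x_k$ is the frame operator. Since $S$ is bounded, positive and invertible (as noted in the paper), its inverse $S^{-1}$ is likewise bounded, positive and invertible on $H$. In particular $S^{-1}$ is injective (as it is invertible) and self-adjoint (as the inverse of a self-adjoint operator).

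Setting $T := S^{-1}$, the hypotheses of Theorem \ref{j1} are satisfied: $T$ is injective and self-adjoint. Therefore Theorem \ref{j1} applies in both directions and yields that $\{x_k\}_{k \in \Bbb{Z}}$ is a phase retrievable frame if and only if $\{T x_k\}_{k \in \Bbb{Z}} = \{S^{-1} x_k\}_{k \in \Bbb{Z}}$ is a phase retrievable frame, which is exactly the claim.

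There is no real obstacle here; the only thing to verify carefully is that $S^{-1}$ is self-adjoint, but that is immediate from $S$ being positive (hence self-adjoint) and invertible. The corollary is genuinely a clean specialization of Theorem \ref{j1}, which is why the proof can be presented as a one-line invocation of that theorem.
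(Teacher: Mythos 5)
Your proposal is correct and matches the paper's intent: the paper states this corollary precisely as a consequence of Theorem \ref{j1}, and the intended argument is exactly your specialization $T = S^{-1}$, which is bounded, invertible (hence injective) and self-adjoint since $S$ is positive and invertible, so both directions of Theorem \ref{j1} apply to the canonical dual $\{S^{-1}x_k\}_{k\in\Bbb{Z}}$.
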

\begin{proposition} \label{j4}
Let $\{x_k\}_{k \in \Bbb{Z}}$ be a phase retrievable frame for $H$ with dual frame $\{y_k\}_{k \in \Bbb{Z}}$.
If $\{y_k\}_{k \in \Bbb{Z}}$ is an orthogonal sequence,
then  $\{y_k\}_{k \in \Bbb{Z}}$ is a phase retrievable frame.
\end{proposition}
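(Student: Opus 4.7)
The plan is to exploit the orthogonality of $\{y_k\}_{k\in\Bbb{Z}}$ to pin down $\{x_k\}_{k\in\Bbb{Z}}$ explicitly as $x_k=y_k/\|y_k\|^2$, and then transfer phase retrievability from $\{x_k\}$ to $\{y_k\}$ by a term-wise rescaling.

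First I would apply the frame inequality for $\{y_k\}$ to the element $y_j$: orthogonality collapses $\sum_k|\langle y_j,y_k\rangle|^2$ to $\|y_j\|^4$, so $A\|y_j\|^2\le \|y_j\|^4\le B\|y_j\|^2$ and every $y_k$ satisfies $\sqrt{A}\le\|y_k\|\le\sqrt{B}$. In particular, $\{y_k/\|y_k\|\}_{k\in\Bbb{Z}}$ is an orthonormal basis of $H$, which yields
\begin{equation*}
w=\sum_{k\in\Bbb{Z}}\frac{\langle w,y_k\rangle}{\|y_k\|^2}\,y_k\qquad (w\in H).
\end{equation*}
Comparing this with the dual-frame reconstruction $w=\sum_k\langle w,y_k\rangle x_k$ and specialising to $w=y_j$, orthogonality kills every term but the $k=j$ one and gives $\|y_j\|^2\bigl(x_j-y_j/\|y_j\|^2\bigr)=0$, so $x_j=y_j/\|y_j\|^2$ for each $j$.

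With the two sequences identified up to a positive scalar on each term, the equivalence $|\langle x,y_k\rangle|=|\langle z,y_k\rangle|\Leftrightarrow|\langle x,x_k\rangle|=|\langle z,x_k\rangle|$ is immediate (divide by $\|y_k\|^2>0$), so phase retrievability of $\{x_k\}$ forces $x=\alpha z$ with $|\alpha|=1$. Alternatively, one can write $x_k=Ty_k$, where $T$ acts on the orthonormal basis $\{y_k/\|y_k\|\}$ by multiplication by $1/\|y_k\|$; the uniform bounds on $\|y_k\|$ make $T$ bounded, injective, and self-adjoint, so Theorem \ref{j1} closes the argument in one stroke. The main subtlety I anticipate is the coefficient extraction at $w=y_j$, which rests on the unconditional convergence of both reconstruction series, and this is exactly what the lower bound $\|y_k\|\ge\sqrt{A}$ (together with the fact that an orthogonal frame is automatically a Riesz basis) provides.
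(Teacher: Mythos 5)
Your proof is correct, and it takes a genuinely different --- and more direct --- route than the paper's. Your key step is the identification $x_j=y_j/\|y_j\|^2$: plugging $w=y_j$ into the reconstruction formula $w=\sum_k\langle w,y_k\rangle x_k$ kills every term except $k=j$ by orthogonality (there is no convergence subtlety here, since the other summands are literally zero), and then $|\langle x,y_k\rangle|=\|y_k\|^2\,|\langle x,x_k\rangle|$ makes the transfer of phase retrievability from $\{x_k\}$ to $\{y_k\}$ a one-line positive rescaling. The paper instead rewrites the hypothesis $|\langle x,y_k\rangle|=|\langle y,y_k\rangle|$ as $\langle x,\overline{\alpha_k}y_k\rangle=\langle y,\overline{\beta_k}y_k\rangle$ with $\alpha_k=\langle y_k,x\rangle$ and $\beta_k=\langle y_k,y\rangle$, substitutes the expansion $x=\sum_j\langle x,x_j\rangle y_j$, isolates the $k$-th term by orthogonality, sums over $k$ to obtain $\sum_k\langle x,x_k\rangle\langle y_k,x\rangle=\sum_k\langle y,x_k\rangle\langle y_k,y\rangle$ (i.e.\ $\|x\|^2=\|y\|^2$), and then appeals to Lemma 6.3.2 of [9] to conclude $x=y$; that argument never invokes the phase retrievability of $\{x_k\}$ and its stated conclusion $x=y$ cannot be right (take $y=-x$), so your version not only differs but actually repairs a defect. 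Two small remarks: in your alternative formulation via Theorem \ref{j1}, the operator $T$ with $x_k=Ty_k$ acts on the orthonormal basis $y_k/\|y_k\|$ by the scalar $1/\|y_k\|^2$, not $1/\|y_k\|$ (the uniform bounds $\sqrt{A}\le\|y_k\|\le\sqrt{B}$ still give boundedness, injectivity and self-adjointness, so nothing breaks); and your argument, like the paper's, tacitly assumes $y_k\neq 0$ for every $k$, which is the standard reading of an ``orthogonal sequence'' and should perhaps be said explicitly.
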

\begin{proof}
Let
 $|\langle x, y_k \rangle | = |\langle y, y_k \rangle |$ for all $x,y \in H$ and $k \in \Bbb{Z}$.
Then
\begin{eqnarray*}
\langle x,y_k \rangle \overline{\langle x,y_k \rangle}=
\langle y,y_k \rangle \overline{\langle y,y_k \rangle},
\end{eqnarray*}
i.e.,
\begin{eqnarray*}
\langle x, y_k \rangle \langle y_k,x\rangle=
\langle y,y_k \rangle \langle y_k,y \rangle.
\end{eqnarray*}
Put $\alpha_k=\langle y_k,x \rangle$ and
$\beta_k= \langle y_k,y \rangle$. Then
\begin{eqnarray*}
\langle x, \overline{\alpha_k}  \, y_k \rangle =
\langle y, \overline{\beta_k}  \, y_k \rangle.
\end{eqnarray*}
Since $\{x_k\}_{k \in \Bbb{Z}}$ and $\{y_k\}_{k \in \Bbb{Z}}$
are dual frame, we have
\begin{eqnarray*}
x= \sum_{j \in \Bbb{Z}} \langle x, x_j \rangle y_j\,\,\,\,\, \hbox{and} \,\,\,\,\,\,    y= \sum_{j \in \Bbb{Z}} \langle y, x_j \rangle y_j.
\end{eqnarray*}
It follows that
\begin{eqnarray*}
\bigg\langle \sum_{j \in \Bbb{Z}} \langle x, x_j \rangle y_j, \overline{\alpha_k}  y_k \bigg\rangle = \bigg\langle  \sum_{j \in \Bbb{Z}} \langle y, x_j \rangle y_j, \overline{\beta_k} y_k \bigg\rangle.
\end{eqnarray*}
This together with the orthogonality of $\{y_k\}_{k \in \Bbb{Z}}$ shows that  for every $k\in\Bbb{Z}$
\begin{eqnarray*}
  \langle x, x_k \rangle \,  \alpha_k  \, \langle y_k, y_k \rangle = \langle y, x_k \rangle \,  \beta_k \,  \langle y_k, y_k \rangle.
\end{eqnarray*}
Thus
\begin{eqnarray*}
 \sum_{k \in \Bbb{Z}}  \langle x, x_k \rangle  \langle y_k, x \rangle =   \sum_{k \in \Bbb{Z}}  \langle y, x_k \rangle \langle y_k, y \rangle.
\end{eqnarray*}
From Lemma $6.3.2$ in [9] we infer that $x=y$.
Therefore, $\{y_k\}_{k \in \Bbb{Z}}$ is a phase retrievable frame.
\end{proof}
\begin{proposition} \label{j7}
Let  $T \in \mathcal{L}(H)$ be  an injective  self-adjoint operator and $\lambda>0$.
 Let also $\{x_k\}_{k \in \Bbb{Z}}$ and $\{y_k\}_{k \in \Bbb{Z}}$ be sequences in $H$ such that
for every $k\in\Bbb{Z}$
\begin{eqnarray}\label{f7}
|\langle x, x_k  \pm y_k \rangle | = \lambda | \langle x,  x_k \rangle |.
\end{eqnarray}
If either
$\{x_k\}_{k \in \Bbb{Z}}$ or $\{Tx_k\}_{k \in \Bbb{Z}}$
 is a phase retrievable frame,
then $\{x_k \pm y_k\}_{k \in \Bbb{Z}}$ is  a phase retrievable frame.
\end{proposition}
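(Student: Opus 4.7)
The plan is to split the proof into two pieces: first establish that $\{x_k\pm y_k\}_{k\in\mathbb{Z}}$ satisfies the frame inequalities, then verify the phase retrievability. Both steps are driven by squaring the identity (\ref{f7}), which intertwines the analysis operators of $\{x_k\}$ and $\{x_k\pm y_k\}$ by the constant factor $\lambda^2$.

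For the frame bounds, first I would reduce the hypothesis to the case where $\{x_k\}_{k\in\mathbb{Z}}$ itself is a phase retrievable frame. If only $\{Tx_k\}_{k\in\mathbb{Z}}$ is assumed to be a phase retrievable frame, I would invoke the converse direction of Theorem \ref{j1} -- available precisely because $T$ is both injective and self-adjoint -- to transfer the property back to $\{x_k\}_{k\in\mathbb{Z}}$. Once $\{x_k\}$ has frame bounds $A,B>0$, squaring (\ref{f7}) and summing over $k$ gives
\begin{equation*}
\sum_{k\in\mathbb{Z}} |\langle x, x_k\pm y_k\rangle|^2 \;=\; \lambda^2 \sum_{k\in\mathbb{Z}} |\langle x, x_k\rangle|^2,
\end{equation*}
so $\lambda^2 A$ and $\lambda^2 B$ become valid frame bounds for $\{x_k\pm y_k\}_{k\in\mathbb{Z}}$.

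For phase retrievability, suppose that $x,z\in H$ satisfy $|\langle x, x_k\pm y_k\rangle| = |\langle z, x_k\pm y_k\rangle|$ for every $k\in\mathbb{Z}$. Applying (\ref{f7}) to each side turns this into $\lambda|\langle x, x_k\rangle| = \lambda|\langle z, x_k\rangle|$, and dividing by $\lambda>0$ yields $|\langle x, x_k\rangle|=|\langle z, x_k\rangle|$ for every $k$. Since $\{x_k\}_{k\in\mathbb{Z}}$ is phase retrievable (either by hypothesis or by the reduction above), there exists $\alpha\in\mathbb{C}$ with $|\alpha|=1$ such that $x=\alpha z$, as required.

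The hypothesis (\ref{f7}) is strong enough that the argument is essentially mechanical; the only subtle point -- and the only place where the particular assumptions on $T$ are used -- is the appeal to Theorem \ref{j1} to move from a phase retrievable frame $\{Tx_k\}$ back to $\{x_k\}$, which genuinely requires $T$ to be injective \emph{and} self-adjoint. I do not anticipate any other obstacle; in particular, once $\{x_k\}$ has been shown to be a phase retrievable frame, the identity (\ref{f7}) forces $\{x_k\pm y_k\}$ to inherit both the frame inequalities and the phase retrieval property in a single stroke.
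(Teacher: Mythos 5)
Your proposal is correct and follows essentially the same route as the paper: use (\ref{f7}) to transfer the frame inequalities and the phase-retrieval implication from $\{x_k\}_{k\in\Bbb{Z}}$ to $\{x_k\pm y_k\}_{k\in\Bbb{Z}}$, and invoke the converse direction of Theorem \ref{j1} (where injectivity and self-adjointness of $T$ are needed) to reduce the $\{Tx_k\}$ case to the $\{x_k\}$ case. Your frame bounds $\lambda^2 A,\lambda^2 B$ are in fact the accurate ones, whereas the paper states $\lambda A,\lambda B$ — a harmless slip on its part.
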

\begin{proof}
Let  $\{x_k\}_{k \in \Bbb{Z}}$ be a frame for $H$ with bounds $A$ and $B$.
It is easy to see that
 $\{x_k \pm y_k\}_{k \in \Bbb{Z}}$
 is a frame for $H$
with bounds $\lambda A$ and $\lambda B$.
 To complete the proof, let $x, y\in H$ such that
$|\langle x, x_k \pm y_k \rangle | = |\langle y, x_k \pm y_k \rangle | $
 for all $k\in \Bbb{Z}$. Then by (\ref{f7})
\begin{eqnarray*}
\lambda |\langle x, x_k \rangle | =  \lambda|\langle y, x_k \rangle |.
\end{eqnarray*}
So if $\{x_k\}_{k \in \Bbb{Z}}$ is a phase retrievable frame for $H$,
then $\{x_k \pm y_k\}_{k \in \Bbb{Z}}$ is a phase retrievable frame for $H$.
 In the case where,
$\{Tx_k\}_{k \in \Bbb{Z}}$  is a phase retrievable frame,
the result is proved if we only note that by Theorem \ref{j1},  $\{x_k \}_{k \in \Bbb{Z}}$ is a phase retrievable frame for $H$.
\end{proof}

As an immediate consequence of Theorem \ref{j1} and Proposition \ref{j7} we have the following result.
\begin{corollary} \label{j8}
Let $\{x_k\}_{k \in \Bbb{Z}}$ be a phase retrievable frame for $H$ and  $\{y_k\}_{k \in \Bbb{Z}}$ be a sequence in $H$.
Let $T \in \mathcal{L}(H)$
be injective.
If there exists $\lambda>0$ such that for every $k$
\begin{eqnarray*}
|\langle x, T (x_k  \pm y_k) \rangle | = \lambda | \langle x, T x_k \rangle |,
\end{eqnarray*}
then $\{T(x_k \pm  y_k)\}_{k \in \Bbb{Z}}$
is  a phase retrievable frame for $H$.
\end{corollary}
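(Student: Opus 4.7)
The plan is to chain Theorem \ref{j1} and Proposition \ref{j7} together, using $T$ to transfer the phase retrievability hypothesis from $\{x_k\}$ to $\{Tx_k\}$ and then using the $\pm$ perturbation bound to transfer it from $\{Tx_k\}$ to $\{T(x_k \pm y_k)\} = \{Tx_k \pm Ty_k\}$.

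First I would observe that since $T \in \mathcal{L}(H)$ is injective and $\{x_k\}_{k \in \Bbb{Z}}$ is a phase retrievable frame for $H$, Theorem \ref{j1} immediately yields that $\{Tx_k\}_{k \in \Bbb{Z}}$ is a phase retrievable frame for $H$. This is the ``upgrade'' step that produces a sequence of the form required by Proposition \ref{j7}.

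Next, I would recast the hypothesis
\begin{eqnarray*}
|\langle x, T(x_k \pm y_k)\rangle| = \lambda\, |\langle x, Tx_k\rangle|
\end{eqnarray*}
in the form
\begin{eqnarray*}
|\langle x, Tx_k \pm Ty_k\rangle| = \lambda\, |\langle x, Tx_k\rangle|,
\end{eqnarray*}
which is exactly condition (\ref{f7}) applied to the sequences $\{Tx_k\}_{k \in \Bbb{Z}}$ and $\{Ty_k\}_{k \in \Bbb{Z}}$. Having verified this, I would invoke Proposition \ref{j7} with the identity operator in place of $T$ (or equivalently using the first branch of the dichotomy in that proposition, since $\{Tx_k\}$ is itself the phase retrievable frame) to conclude that $\{Tx_k \pm Ty_k\}_{k \in \Bbb{Z}} = \{T(x_k \pm y_k)\}_{k \in \Bbb{Z}}$ is a phase retrievable frame for $H$.

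There is no real obstacle; the statement is essentially a bookkeeping composition of the two preceding results, and the only thing to be careful about is matching the hypothesis (\ref{f7}) to the ambient sequences $\{Tx_k\}$ and $\{Ty_k\}$ rather than to $\{x_k\}$ and $\{y_k\}$. This is also why the corollary requires only injectivity of $T$ (needed for Theorem \ref{j1}) and not self-adjointness: we use $T$ only in the ``forward'' direction, pushing phase retrievability from $\{x_k\}$ to $\{Tx_k\}$, and never need to pull it back.
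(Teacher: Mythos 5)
Your proposal is correct and is exactly the route the paper intends: the paper presents this corollary as an immediate consequence of Theorem \ref{j1} (pushing phase retrievability from $\{x_k\}$ to $\{Tx_k\}$ via the injectivity of $T$) followed by Proposition \ref{j7} applied to the sequences $\{Tx_k\}$ and $\{Ty_k\}$, which is precisely your argument. Your remark that only the first branch of Proposition \ref{j7} is needed, so that no self-adjointness of $T$ is required, is also the right observation.
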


\section{Compliance with ethical standards}
\textbf{Conflict of interest:} All authors declare that they have no conflict of
interest.\\
\textbf{Data Availability Statement:} No data sets were generated or analysed during the current study.

\end{document}